\numberwithin{equation}{section}
\newtheorem{theorem}{Theorem}[section]
\newtheorem{lemma}[theorem]{Lemma}
\newtheorem{proposition}[theorem]{Proposition}
\newtheorem{definition}[theorem]{Definition}
\def\eps{\varepsilon }
\def\D{\partial }
\newcommand{\RR}{\mathbb{R}}
\newcommand{\cO}{\mathcal{O}}
\newcommand{\CC}{\mathbb{C}}
\newcommand{\C}{\mathbb{C}}
\newcommand{\dt}{\frac{d}{dt}}
\newcommand{\NN}{{\mathbb N}}
\newcommand{\ZZ}{{\mathbb Z}}
\newcommand{\TT}{{\mathbb T}}
\newcommand{\e}{{\varepsilon}}
\def\bb1{{1\!\!1}}
\def\cL{\mathcal{L}}
\def\R{\Re e}
\def\I{\Im m}
\def\w{{\omega}}
\newcommand{\T}{{\mathbb T}}
\newcommand{\pa}{{\partial}}
\begin{document}


\title[Remarks on the ill-posedness of the Prandtl equation]{Remarks on the ill-posedness \\ of the Prandtl equation}

\author[D. G\'erard--Varet, T. Nguyen]{David G\'erard--Varet, Toan Nguyen}

\thanks{The first author acknowledges the support of  ANR project ANR-08-JCJC-0104 - CSD 5.
 The second author was supported by  the Foundation Sciences Math\'ematiques de Paris under a postdoctoral fellowship.}

\address{Equipe d'analyse fonctionnelle,
Universit\'e Denis Diderot (Paris 7),
Institut de Math\'ematiques de Jussieu, UMR CNRS 7586.
 }

\email{gerard-varet@math.jussieu.fr}

\address{Equipe d'analyse fonctionnelle,
Universit\'e Pierre et Marie Curie (Paris 6),
Institut de Math\'ematiques de Jussieu, UMR CNRS 7586.
 (Current address: Division of Applied Mathematics, Brown University, Providence, RI, USA)}

\email{nguyent@math.jussieu.fr}

\begin{abstract} In the lines of the recent paper \cite{GD},  we establish various ill-posedness results for the Prandtl equation.  By considering perturbations of stationary shear flows, we show that for some linearizations of the Prandtl equation and some  $C^\infty$ initial data,  local in time  $C^\infty$ solutions do not exist.  At the nonlinear level, we prove that if a flow exists in the Sobolev setting,  
it cannot be
Lipschitz continuous. Besides  ill-posedness in time, we also establish some ill-posedness in space, that casts some light on the results obtained by Oleinik for monotonic data. 
\end{abstract}

\date{\today}

\maketitle


\section{Introduction}
Our concern in this paper is the famous Prandtl equation: 
\begin{equation}\label{prandtl}
\left\{
\begin{array}{rlll} 
\D_t u  +  u \D_x u +  v \D_y  u - \D^2_y  u  + \D_x P &=& f,  \quad y > 0,\\
\D_x  u + \D_y v & = & 0,  \quad  y > 0, \\
u = v  &=& 0, \quad  y = 0,\\
\lim_{y\rightarrow+\infty}  u & = & U(t,x),   \\  
\end{array}
\right.
\end{equation}
that was proposed by Ludwig Prandtl \cite{Pra:1904}  in 1904 as a model for fluids with low viscosity near a solid boundary. This model is obtained formally as a singular limit  of the Navier-Stokes  equations, in the limit of vanishing viscosity. In this asymptotics, $y = 0$ is the boundary, $x$ is a curvilinear coordinate along the boundary, whereas $u=u(t,x,y)$ and $v=v(t,x,y)$ are the tangential and normal components of the velocity in the so-called boundary layer. The pressure $P=P(t,x)$ and tangential velocity $U = U(t,x)$ are given: they describe the flow just outside the boundary layer, and satisfy the Bernoulli equation 
$$\D_t U + U\D_xU + \D_x P =0. $$
Finally,  the source term $f = f(t,x,y)$ accounts for possible additional forcings. We refer to \cite{Guy:2001} for the formal asymptotic derivation and all necessary physical background.  We shall restrict here to  two settings:  
\begin{itemize}
\item {\em the initial value problem (IVP)}:  
\begin{equation} \label{IVP}
(t,x,y) \: \mbox{ in } \:  [0,T) \times \T \times \RR_+,  \: u\vert_{t=0} = u_0(x,y).
\end{equation}
\item {\em the boundary value problem (BVP)}: 
\begin{equation} \label{BVP}
(t,x,y)  \: \mbox{ in }   \: \T \times [0,X) \times \RR_+,  \: u\vert_{x=0} = u_1(t,y).
\end{equation}
\end{itemize}

Although \eqref{prandtl}  is the cornerstone of the boundary layer theory, the well-posedness of the equation and its rigorous derivation from Navier-Stokes are far from being established. The reason is that the  boundary layer undergoes many instabilities,  the impact of which  on the relevance of the Prandtl model is not clear. One  popular instability is the so-called boundary layer separation, which is created by an adverse pressure gradient ($\pa_x P > 0$) and a loss of monotonicity in $y$ of the tangential velocity: see \cite{Guy:2001}. 

Roughly, only two frameworks have led to positive mathematical results: 
\begin{itemize}
\item the analytic framework:  under analyticity of the initial data and the Euler flow, Sammartino and Caflish \cite{SC} showed the well-posedness of the initial value problem,  and successfully justified the asymptotics  locally in time. See also \cite{Lom:2003}. Up to our knowledge it is the only setting in which the Prandtl model is fully justified. 
\item the monotonic framework: under a main assumption of monotonicity in $y$ of  the initial data, Oleinik and Samokhin \cite{Ole} proved the local  well-posedness of both  the boundary value and initial value problems.   The latter result was extended to be global in time by a work of Xin and Zhang \cite{Xin} when $f = 0$ and  $\pa_x P \le 0$. 
\end{itemize} 
We refer to the review article \cite{E:2000} for precise statements and ideas of proofs. We stress that in all the aforementioned works, either analyticity or monotonicity of the initial data is assumed.  
When such assumptions are no longer satisfied, instabilities develop, and the Prandtl model is unlikely to be valid, at least globally in space time.  For instance, it was shown by E and Engquist \cite{E:1997} that $C^\infty$ solutions of \eqref{prandtl} do not always exist globally in time.  As regards the asymptotic derivation of Prandtl,  some counterexamples due to Grenier \cite{Gre:2000} have shown that the asymptotics does not hold  in the Sobolev space 
$W^{1,\infty}$.
Finally, negative results  have culminated in the recent paper \cite{GD} by the first author and Dormy, that establishes some linear ill-posedness for the initial value problem in a Sobolev setting. We shall come back to this article in due course. Broadly, the authors consider the linearized Prandtl equation around a non-monotonic  shear flow, and construct $\cO(k^{-\infty})$ approximate solutions  that grow like $e^{\sqrt k t}$ for high frequencies $k$ in $x$. 
{\em Our aim in this note is to ponder on this construction  to establish further ill-posedness results for the Prandtl equation.}  

\medskip
Let us now present our results. We only treat the case of constant   $U \ge 0$, and  shall consider perturbations of some steady shear flow solutions: 
$$(u,v) = (u_s(y), 0), \quad u_s(0) = 0, \quad \lim_{y\rightarrow +\infty} u_s = U.$$
Note that only non-trivial source terms $f$ yield non-trivial solutions of this form. But  up to minor changes, our results adapt to some  unsteady shear flows  $(u(t,y), 0)$ satisfying $\pa_t u - \pa^2_y u = 0$. Thus, the important special case $f=0$ can be treated as well.    

The system satisfied by the perturbation $(u,v)$ of $(u_s,0)$ reads:
\begin{equation}\label{prandtl2}
\left\{
\begin{array}{rlll} 
\D_t u  +  u_s \D_x u +  u_s' \, v  + u \D_x u + v \D_y u - \D^2_y  u   &=& 0,  \quad y > 0,\\
\D_x  u + \D_y v & = & 0,  \quad  y > 0, \\
u = v  &=& 0, \quad  y = 0, 
\end{array}
\right.
\end{equation}
plus the condition $\lim_{y\rightarrow+\infty}  u  =  0$, that will be encoded in the functional spaces. 

Our first result is related to the linearized version of this system, that is:   
\begin{equation}\label{prandtl3}
\left\{
\begin{array}{rlll} 
\D_t u  +  u_s \D_x u +  u_s' \, v   - \D^2_y  u   &=& 0,  \quad y > 0,\\
\D_x  u + \D_y v & = & 0,  \quad  y > 0, \\
u = v  &=& 0, \quad  y = 0.   
\end{array}
\right.
\end{equation}

We state a strong ill-posedness result for the initial value problem, namely: 
\begin{theorem}[Non-existence of solutions for linearized Prandtl] \label{theo1}
There exists a shear flow $u_s$  with $\: u_s-U  \in C^\infty_c(\RR_+)$ such that : for all $T > 0$, there exists an initial data $u_0$ satisfying  

\smallskip
\noindent
i)  $\displaystyle e^{y} u_0 \in H^\infty(\T \times \RR_+)$. 

\smallskip
\noindent
ii)  The IVP \eqref{prandtl3}-\eqref{IVP}  has no distributional solution $u$ with
$$u \in L^\infty(]0,T[; L^2(\T \times \RR_+)), \quad \pa_y u \in  L^2(]0,T[ \times \T \times  \RR_+).$$ 
\end{theorem}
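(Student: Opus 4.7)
The plan is to argue by contradiction, leveraging the approximate unstable modes constructed in \cite{GD} against a closed-graph continuity estimate. Roughly: if solutions in the prescribed energy class existed for every smooth datum, the induced linear solution operator would be continuous from a suitable Fréchet topology on the data into the Banach space $\mathcal{Y} := L^\infty(0,T;L^2) \cap L^2(0,T; H^1_y)$; but the polynomial-in-$k$ bound this provides would be incompatible with the superpolynomial growth $e^{c_0 \sqrt{k}\,t}$ of the modes from \cite{GD}.

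First, I fix $u_s$ as the non-monotonic shear profile from \cite{GD}, with $u_s - U \in C^\infty_c(\RR_+)$, for which the Fourier-in-$x$ operator $L_k$ (obtained by setting $\hat v = -ik \int_0^y \hat u$ in \eqref{prandtl3}) admits approximate eigenpairs $(\lambda_k,\phi_k)$ satisfying $\R \lambda_k \geq c_0 \sqrt{k}$, $\|\phi_k\|_{L^2_y} \sim 1$ and $\|e^y \phi_k\|_{H^m_y} = O(k^{A_m})$ for each $m$, while the residual $R_k := (\lambda_k - L_k)\phi_k$ satisfies $\|e^y R_k\|_{H^m_y} = O(k^{-M})$ for every $M$. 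Next, I assume for contradiction that for every $u_0$ with $e^y u_0 \in H^\infty(\T \times \RR_+)$ the IVP admits a distributional solution $u \in \mathcal{Y}$. A standard energy argument --- in which the nonlocal term $u_s' v$ is harmless because $u_s'$ is compactly supported in $y$ --- yields uniqueness in $\mathcal{Y}$; together with linearity this makes the solution map $S \colon X \to \mathcal{Y}$ well-defined, where $X := \{u_0 : e^y u_0 \in H^\infty\}$ carries its natural Fréchet topology. The graph of $S$ is closed because distributional PDEs pass to weak limits, so the closed graph theorem produces $N \in \NN$ and $C > 0$ with
$$\|S u_0\|_{\mathcal{Y}} \leq C\,\|e^y u_0\|_{H^N(\T \times \RR_+)}.$$
Time-autonomy and Duhamel then extend this to the forced problem: if $u(0) = 0$ with source $f$, then $\|u\|_{\mathcal{Y}} \leq C\,\|f\|_{L^1(0,T; X)}$.

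To derive the contradiction, I test on $u_{0,k}(x,y) := \phi_k(y) e^{ikx}$: one has $\|e^y u_{0,k}\|_{H^N} \lesssim k^{N+A_N}$, so the closed-graph estimate forces $\|S u_{0,k}\|_{\mathcal{Y}} \leq C k^{N+A_N}$. On the other hand, the approximate mode $u_k^{\mathrm{app}}(t,x,y) := e^{\lambda_k t}\phi_k(y) e^{ikx}$ satisfies $\|u_k^{\mathrm{app}}\|_{\mathcal{Y}} \geq c\,e^{c_0\sqrt{k}\,T}$, while the difference $e_k := S u_{0,k} - u_k^{\mathrm{app}}$ has zero initial datum and solves \eqref{prandtl3} with source $-e^{\lambda_k t} R_k(y) e^{ikx}$ whose $L^1(0,T; X)$-norm is bounded by $k^{N+A_N-M}(c_0\sqrt{k})^{-1} e^{c_0\sqrt{k}\,T}$. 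Choosing $M$ sufficiently large the inhomogeneous bound yields $\|e_k\|_{\mathcal{Y}} = o(e^{c_0\sqrt{k}\,T})$, so that $\|S u_{0,k}\|_{\mathcal{Y}} \geq \tfrac{c}{2}\,e^{c_0\sqrt{k}\,T}$ for $k$ large, contradicting the polynomial bound and producing the desired $u_0$. The delicate point is precisely this error control: the naive Gronwall estimate for $L_k$ only gives semigroup growth $e^{Ck\,t}$, which would swamp the $e^{c_0\sqrt{k}\,t}$ signal of the approximate mode; it is essential that the \emph{assumed} closed-graph bound on the forced problem supply a polynomial-in-$k$ amplification instead.
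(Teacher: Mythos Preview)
Your proposal is correct and follows essentially the same route as the paper: contradiction via the closed graph theorem, the quasimodes of \cite{GD}, uniqueness from an energy/Gronwall estimate (the paper does this mode-by-mode as Proposition~\ref{theo-uniquelin}), and Duhamel to transfer the polynomial closed-graph bound to the forced equation for the error $e_k$. Two cosmetic points: the quasimode $\phi_k$ implicitly depends on an expansion order $n$, so ``$\|e^y R_k\|_{H^m}=O(k^{-M})$ for every $M$'' should really read ``for every $M$ there is a choice of quasimode with this decay''; and your Duhamel bound is cleanest stated in $L^\infty_tL^2_{x,y}$ rather than in the full $\mathcal Y$-norm (the $L^2_tH^1_y$ component does not follow directly from $\int_0^t S(t-s)f(s)\,ds$ and the closed-graph estimate), but this is harmless since the lower bound on $\|u_k^{\mathrm{app}}\|$ and the comparison are already available in $L^\infty_tL^2$, exactly as the paper does.
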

We quote  that  a solution $u$ of \eqref{prandtl3}  with the above regularity satisfies 
$$ \pa_t u \in L^2(]0,T[; H^{-1}(\T \times \RR_+)),  $$
so that  in turn, 
$$ u  \in C([0,T], H^{-1}(\T \times \RR_+)) \cap C_w([0,T]; L^2(\T \times \RR_+)). $$
This gives a meaning to the initial condition.  

Theorem \ref{theo1} shows that the linearized Prandtl system \eqref{prandtl3} is ill-posed in any reasonable sense, which strengthens the result of \cite{GD}. A  difficult  open problem  is the extension of such theorem to the nonlinear setting \eqref{prandtl2}. For the time being,  we are unable to disprove the existence of a flow for the nonlinear equation. In short, we are only able to show that if the flow exists, it is not Lipschitz continuous  from $H^m$ to $H^1$, for any $m$. More precisely, we introduce the 
\begin{definition}[Lipschitz well-posedness  for Prandtl equation near a shear flow]\label{def-wellposed} 
For any smooth $u_s$ and $m \ge 0$, we say the IVP \eqref{prandtl2}--\eqref{IVP} is  locally $(H^{m},H^1)$ Lipschitz well-posed  if there are  constants $C,\delta_0,T$,  and a subspace $\mathcal{X}$ of $L^\infty(]0,T[; H^1(\T \times \RR_+))$  s.t.

\smallskip 
\noindent  
for any initial data $ u_0$ with $\displaystyle e^y u_0 \in 
H^{m}(\T \times \RR_+)$ and  $\displaystyle  \|e^y u_0 \|_{H^{m}} \le \delta_0,$
there exists a unique distributional solution $u$ of \eqref{prandtl2}-\eqref{IVP} in $\mathcal{X}$,  and there holds
\begin{equation}\label{stab-ineq} 
\mbox{ess}\sup_{0\le t \le T}\| u(t) \|_{H^1_{x,y}} 
\: \le \:  C \| e^y u_0 \|_{H^{m}_{x,y}} 
\end{equation}
\end{definition}
Let us quote that if $u$ belongs to $\mathcal{X}$, all terms in equation \eqref{prandtl3} are well-defined as distributions, including the nonlinear term 
$$u \pa_x u + v \pa_y u  \: =  \: \pa_x(u^2) + \pa_y(v \, u) \:  \in L^\infty(0,T; W^{-1,p}_{loc}(\T \times \RR_+)), \quad \forall p < 2. $$
Again, the solution $u$ satisfies 
$$ \pa_t u \in  L^\infty(0,T; W^{-1,p}_{loc}(\T \times \RR_+)), \quad u \in    C([0,T]; W^{-1,p}_{loc}(\T \times \RR_+)), \quad \forall p < 2,$$
which gives a meaning to the initial condition.  We can now state 
\begin{theorem}[No Lipschitz continuity of the flow] \label{theo2}
There exists a shear flow $u_s$ with $u_s - U  \in C^\infty_c(\RR_+)$ such that: for all $m \ge 0$, the Cauchy problem \eqref{prandtl2}-\eqref{IVP} is not  locally $(H^m,H^1)$ Lipschitz well-posed. 
 \end{theorem}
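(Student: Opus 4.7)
The proof is by contradiction, leveraging the approximate high-frequency unstable modes constructed in \cite{GD} (the same building block that underlies Theorem \ref{theo1}). The plan has two steps: first, show that $(H^m,H^1)$ Lipschitz well-posedness of the nonlinear problem implies, after rescaling and extraction, an initial-value estimate for the linearized flow \eqref{prandtl3}; second, contradict this linear estimate using the unstable modes, whose growth rates at spatial frequency $k$ are $e^{\sigma\sqrt{k}\,t}$.

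For the first step, suppose Lipschitz well-posedness with constants $C,\delta_0,T$ and solution class $\mathcal{X}$. Given $v_0$ with $\|e^y v_0\|_{H^m}\le 1$ and $\epsilon\le\delta_0$, apply Definition \ref{def-wellposed} to $u_0:=\epsilon v_0$: there is a unique $u^\epsilon\in\mathcal{X}$ with $\|u^\epsilon\|_{L^\infty(]0,T[;H^1)}\le C\epsilon$. The rescaled pair $(\tilde u^\epsilon,\tilde v^\epsilon) := (\epsilon^{-1}u^\epsilon,\epsilon^{-1}v^\epsilon)$ satisfies
\begin{equation*}
\pa_t \tilde u^\epsilon + u_s\,\pa_x \tilde u^\epsilon + u_s'\,\tilde v^\epsilon - \pa_y^2 \tilde u^\epsilon + \epsilon\bigl(\tilde u^\epsilon\,\pa_x\tilde u^\epsilon + \tilde v^\epsilon\,\pa_y\tilde u^\epsilon\bigr) = 0
\end{equation*}
with initial datum $v_0$, and is uniformly bounded by $C$ in $L^\infty(]0,T[;H^1)$. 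A weak-$\ast$ compactness argument then yields a limit point $\tilde u$ solving the linearized system \eqref{prandtl3} in the distributional sense, with the linear bound $\|\tilde u\|_{L^\infty(]0,T[;H^1)}\le C$. The $\epsilon$-nonlinear term vanishes in the limit thanks to the uniform $H^1$ control of $\tilde u^\epsilon$ and the representation $\tilde v^\epsilon=-\int_0^y\pa_x\tilde u^\epsilon(\cdot,y')\,dy'$.

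For the second step, apply this linear bound to $v_0 := \alpha_k u^{app}_k(0,\cdot)$, where $u^{app}_k(t,x,y)=\Re\{e^{ikx}V_k(t,y)\}$ is the \cite{GD} approximate unstable mode (with $V_k$ exponentially decaying in $y$, residual $\|R_k\|_{L^2}=\cO(k^{-N})$ for every $N$, and $\|u^{app}_k(t,\cdot)\|_{L^2}\gtrsim e^{\sigma\sqrt{k}\,t}$) and $\alpha_k\sim k^{-m}$ is the normalization ensuring $\|e^y v_0\|_{H^m}=1$. Let $\tilde u^{(k)}$ be the linearized solution produced by the first step. The difference $w^{(k)}:=\tilde u^{(k)} - \alpha_k u^{app}_k$ solves \eqref{prandtl3} with zero initial datum and forcing $-\alpha_k R_k=\cO(k^{-N+m})$; a Duhamel-type argument that slices $]0,T[$ into short intervals and absorbs the residual into a perturbation of the initial data on each slice, combined with iteration of the scaling-based linear estimate, yields $\|w^{(k)}\|_{L^\infty H^1}\to 0$ as $k\to\infty$ (choosing $N>m$ suffices). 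Consequently
\begin{equation*}
\|\tilde u^{(k)}(T)\|_{H^1} \;\ge\; \alpha_k\|u^{app}_k(T)\|_{H^1} - \|w^{(k)}(T)\|_{H^1} \;\gtrsim\; k^{1-m}\,e^{\sigma\sqrt{k}\,T},
\end{equation*}
which diverges as $k\to\infty$ for any fixed $T>0$, contradicting $\|\tilde u^{(k)}\|_{L^\infty H^1}\le C$.

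The principal obstacle is the Duhamel step: the rescaling produces only an initial-value estimate for \eqref{prandtl3}, and upgrading it to a forcing estimate for $w^{(k)}$ requires care, hence the time-slicing device above. A secondary subtlety is to ensure that the weak-$\ast$ limit $\tilde u$ genuinely records the unstable mode rather than an extraneous linear solution — non-uniqueness is conceivable in view of Theorem \ref{theo1} — which we handle by testing against $e^{-ikx}\psi(y)$ to isolate the $k$-th Fourier component in $x$, where the growth is concentrated.
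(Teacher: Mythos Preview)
Your first step---rescaling the nonlinear solution by $\epsilon^{-1}$ and passing to a weak-$\ast$ limit to produce a solution of the linearized equation \eqref{prandtl3} with the uniform $L^\infty H^1$ bound---is correct and is exactly what the paper does. But the paper then finishes in one line: it takes $v_0=u_0$ to be the datum furnished by Theorem~\ref{theo1}, for which \emph{no} solution of \eqref{prandtl3} exists with $u\in L^\infty L^2$, $\pa_y u\in L^2$. The limit $\tilde u\in L^\infty H^1$ is such a solution, contradiction. All the work with the unstable quasimode, the Duhamel formula, and uniqueness is already packaged inside Theorem~\ref{theo1}; there is no need to repeat it here.

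Your second step, by contrast, tries to re-derive the contradiction directly from the modes, and the Duhamel part has a genuine gap. The scaling-based linear estimate you obtain maps $e^{-y}H^m$ initial data to $L^\infty H^1$ solutions. To iterate it over time-slices you would need to feed the output $w^{(k)}(t_i)\in H^1$ back in as an $e^{-y}H^m$ datum on the next slice, and that inclusion fails (both regularity and decay are lost). Nor does the estimate carry any small-time gain, so even formally the iteration would produce $C^{T/h}$. Projecting onto the $k$-th Fourier mode does give uniqueness at that frequency, but the only bound you then have for the forced equation at frequency $k$ is the energy estimate of Proposition~\ref{theo-uniquelin}, which grows like $e^{C|k|t}$ and swamps the $e^{\sigma\sqrt{k}\,t}$ growth of the mode---so it cannot show $\|w^{(k)}\|\to 0$. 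The correct route (if you insist on bypassing Theorem~\ref{theo1}) is to use that frequency-wise uniqueness to conclude that the rescaling limit is \emph{the} unique linear solution for every $v_0$, hence defines a genuine bounded linear operator $S(t):e^{-y}H^m\to H^1$; then the standard Duhamel formula $w^{(k)}(t)=-\int_0^t S(t-s)\,\alpha_k r_k^n(s)\,ds$ applies directly and gives the needed smallness. This is precisely the mechanism inside the proof of Theorem~\ref{theo1}, so you are better off invoking that theorem as a black box.
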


The proof of the linear ill-posedness result, Theorem \ref{theo1}, is based on the previous construction (\cite{GD}) of a strong unstable quasimode for  the linearized Prandtl operator, together with a use of the standard closed graph theorem. For Theorem \ref{theo2}, we make a simple use of an idea of Guo and Tice (\cite{GT}) on deriving the ill-posedness of the flow from a strong linearized instability result.

\medskip
We shall end this introduction with
 space instability results, related to the boundary value problem. Let us stress that space instability is very natural in boundary layer theory. Indeed,  many works on boundary layers are related to steady problems for flows around obstacles. In this context,   $x$ can be seen as the evolution variable, $x=0$ corresponding to the leading edge of the obstacle. Boundary layer separation, that takes place upstream from the leading edge, can also be seen  as a blow up phenomenon in space.  One can also refer to the paper by M\'etivier \cite{Met}, for a mathematical study  of spacial instabilities in the context of Zakharov equations.

Precisely, at the linear level, we prove the following:

 \begin{theorem}[Spacial ill-posedness for linearized Prandtl equation] \label{theo3}
Let $U > 0$. There exists a shear flow $u_s$  with $u_s(y) > 0$ for $y > 0$,  $\: u_s-U  \in C^\infty_c(\RR_+)$ and such that : for all $X > 0$, there exists an initial data $u_1$ satisfying  

\smallskip
\noindent
i)  $\displaystyle e^{y} u_1 \in H^\infty(\T \times \RR_+)$. 

\smallskip
\noindent
ii)  The BVP  \eqref{prandtl2}-\eqref{BVP} has no weak solution $u$ with 
$$u_s \, u \in L^2_t(\T; C_x([0,X] ; H^2_y(\RR_+))), \quad  u  \in L^2_{t,x}(\T \times (0,X); H^2_y(\RR_+)).$$
\end{theorem}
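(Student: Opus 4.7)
The plan is to mimic the proof of Theorem \ref{theo1} with the roles of $t$ and $x$ exchanged: I build a spatial quasimode for the linearized Prandtl operator -- an approximate solution that is oscillating in $t$ but exponentially growing in $x$ -- and feed it into a closed graph argument. The hypothesis $u_s > 0$ for $y>0$ is what makes the substitution sensible, since then $u_s \pa_x$ plays the role of an evolution operator in the $x$ direction.

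I start by picking a smooth shear profile $u_s$ with $u_s - U \in C^\infty_c(\RR_+)$, $u_s(y) > 0$ for $y > 0$, admitting a non-degenerate interior extremum $y_c > 0$ (so $u_s'(y_c)=0$ and $u_s''(y_c)\neq 0$). I then look for quasimodes of the linearized equation (which I read as \eqref{prandtl3}, the reference to \eqref{prandtl2} in the statement being, I assume, a typo consistent with the title) of the form
$$ u(t,x,y) \: = \: e^{ikt + \mu_k x}\bigl(U_k^0(y) + \text{correctors}\bigr), \qquad k \in \NN^*, $$
with $\mu_k \in \CC$. Using $v = -\mu_k e^{ikt + \mu_k x} V_k(y)$ with $V_k = \int_0^y U_k^0$, the leading-order equation reduces to
$$ ik\, U_k^0 + \mu_k\bigl(u_s U_k^0 - u_s'\, V_k\bigr) - (U_k^0)'' = 0, \qquad U_k^0(0) = V_k(0) = 0, \quad U_k^0 \to 0 \text{ at } +\infty, $$
which is precisely the eigenvalue problem analyzed in \cite{GD} after the formal substitution $(\tau_{\mathrm{GD}}, ik_{\mathrm{GD}}) \leftrightarrow (ik, \mu_k)$. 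A critical-layer WKB analysis at $y_c$, identical in spirit to the one there, produces a dispersion relation; I select a branch with $\R \mu_k \to +\infty$ as $k \to +\infty$ and build correctors iteratively so that $u$ satisfies \eqref{prandtl3} modulo a residual that is $\cO(k^{-N})$, for any $N$, in any Sobolev norm with weight $e^y$, with $U_k^0$ normalized to order one in the same norms.

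Next, I suppose for contradiction that for every $u_1$ with $e^y u_1 \in H^\infty(\T\times\RR_+)$ the BVP admits a weak solution in the class of the statement. A standard $x$-energy estimate for \eqref{prandtl3}, available thanks to $u_s > 0$, provides uniqueness, so the solution operator $T: u_1 \mapsto u$ is well-defined and linear. Passing to distributional limits shows that $T$ has closed graph from $\{u_1 : e^y u_1 \in H^m\}$ into the solution class; by the closed graph theorem $T$ is continuous, yielding for every $m$ some $C_m$ with
$$ \|u_s u(\cdot, X/2, \cdot)\|_{L^2_t H^2_y} \: \le \: C_m\,\|e^y u_1\|_{H^m_{t,y}}. $$
Tested on the explicit data $u_1 = e^{ikt} U_k^0(y)$, the right-hand side is $O(k^M)$ for some $M$. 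On the other hand, for $k$ large an exact solution with this data can be produced as a Gronwall-in-$x$ perturbation of the quasimode absorbing the $\cO(k^{-N})$ residual; its norm at $x = X/2$ is of order $e^{(X/2) \R \mu_k}$, which blows up and contradicts the bound. To exhibit an explicit datum $u_1^*$ for which no solution exists, I superpose
$$ u_1^*(t,y) \: = \: \sum_{k \ge k_0} c_k\, e^{ikt}\, U_k^0(y), $$
choosing positive $c_k$ that decay faster than any polynomial but with $\sum_k c_k\, e^{(X/2) \R \mu_k} = +\infty$ (possible since $\R\mu_k \to +\infty$): then $e^y u_1^* \in H^\infty$ while the putative solution has infinite norm at $x = X/2$.

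The hard part will be the first step: I must verify that the critical-layer dispersion relation, read in the spatial direction, admits a branch with positive real part tending to $+\infty$ (the analogue of GD's $\R\tau\sim\sqrt{k}$), and that the iterative construction of correctors closes in weighted Sobolev norms despite the altered role played by the variables. Once these ingredients are in place, the closed-graph argument and the construction of the bad datum follow the blueprint of Theorem \ref{theo1} with only cosmetic changes to the functional framework.
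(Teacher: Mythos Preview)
Your overall strategy is correct and matches the paper's: build a spatially growing quasimode by adapting the \cite{GD} construction with $t$ and $x$ exchanged, then run a closed graph argument. Two points deserve attention.

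First, the quasimode construction is not quite ``identical in spirit'' to \cite{GD}. When you swap the roles of $t$ and $x$, the small parameter entering the shear-layer equation becomes $\tilde\eps = -\omega(\eps)^{-1}\eps$, which is genuinely \emph{complex}, so the shear-layer profile $\tilde V$ must be evaluated along a complex ray rather than on $\RR$. The paper handles this by proving that $\tilde V$ extends holomorphically to a neighbourhood of the real axis, using the asymptotic theory of linear ODEs with holomorphic coefficients (sectorial expansions near the irregular singular point at infinity). Moreover the dispersion relation for $\omega^{app}(\eps)$ becomes implicit rather than explicit and must be solved via the implicit function theorem before one can read off $\Im\omega^{app}(\eps)$. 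These are the new ingredients beyond \cite{GD}; your sketch does not mention them, though you do flag the dispersion relation as the ``hard part''.

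Second, and more seriously, you write that ``a standard $x$-energy estimate for \eqref{prandtl3}, available thanks to $u_s>0$, provides uniqueness''. This is precisely where the paper says the real difficulty lies: viewed as an evolution in $x$, the equation admits no obvious energy estimate, because the diffusion $-\pa_y^2 u$ has the wrong sign when one tries to propagate an $L^2$-type quantity in $x$. The paper devotes Proposition~\ref{uniqueBVP} to uniqueness and, to make it work, imposes the \emph{additional} structural hypothesis $u_s(y)=y$ near $y=0$. The argument uses the nonlocal multiplier $Lu := u_s u - u_s'\int_0^y u$ (and its $y$-derivatives up to order two) together with a Hardy-type lemma (Lemma~\ref{lemmaLu}) that recovers $\|u\|_{H^1(0,M)}$ and $\|u\|_{H^2(m,M)}$ from $\|Lu\|_{H^2}$; the diffusion term is then controlled only after integrations by parts that exploit both the compact support of $u_s'$ and the linear behaviour of $u_s$ at the boundary. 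Without uniqueness you cannot define the solution map $T$, so the closed graph argument does not even start. Your proposal should carry out this step rather than dismiss it as standard.
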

This is of course an analogue of Theorem \ref{theo1}.  From there, one could also obtain an analogue of Theorem \ref{theo2} : we skip it for the sake of brevity.

The proof of these spacial results are again based on construction of an unstable quasimode for the linearized operator. It turns out that the instability mechanism introduced in \cite{GD} to construct these unstable modes can be modified in such a way that it yields the ill-posedness. One should note that in course of deriving the non-existence result, we are obliged to prove a {\em uniqueness} result, and as it turns out, obtaining such a result is not as straightforward as in the case for the IVP problem.  The difficulty lies in the fact that we now view the equation as an evolution in $x$ and it is not at all obvious for one to obtain certain {\em energy or a priori estimates} for solutions of the BVP problem. Nevertheless, we present a proof of the uniqueness result in the last section of the paper.

\medskip
The outline of the paper is as follows: section \ref{sec1} details the non-existence result stated in Theorem \ref{theo1}. The nonlinear result is explained in section \ref{sec2}. We show in section \ref{sec3} how to adapt the arguments of \cite{GD} to get spacial ill-posedness. 
We shall conclude the paper with
 some comments on this spacial instability, and how it relates to the well-known results of Oleinik.

 \section{The linearized IVP} \label{sec1}
This section is devoted to the proof of Theorem \ref{theo1}.  We start by recalling some key elements of article \cite{GD}. It  deals with the ill-posedness of the linearized Prandtl system \eqref{prandtl3} in  the Sobolev setting. The  high  frequencies in $x$  are investigated.  The main point in the article is the construction of a strong unstable quasimode for  the linearized Prandtl operator $\cL_s u: = u_s\D_x u + v\D_y u_s - \D^2_y u$. 
It is achieved under the main assumption that $u_s$ has a non-degenerate critical point $a > 0$: 
$\displaystyle  u_s'(a) = 0$, $\: \displaystyle u''_s(a) < 0$. 
More precisely, if $u_s$ takes the form 
\begin{equation} \label{specialus}
u_s \: = \:  u_s(a) - \frac{1}{2}(y-a)^2 \quad \mbox{in the vicinity of }  \: a, 
\end{equation}
 one can build accurate approximate solutions  $(u^n_\eps,v^n_\eps)$  of \eqref{prandtl2} {\em that have $x$-frequency  $\eps^{-1}$ and grow exponentially at rate $\eps^{-1/2}$}.  Namely, 
 $$ u^n_\eps(t,x,y) \: = \: i \, e^{i \eps^{-1} x} \, e^{i \eps^{-1} \omega(\eps) t} \, U^n_\eps(y), \quad v^n_\eps(t,x,y)   \: = \: \eps^{-1} \,  e^{i \eps^{-1} x} \, e^{i \eps^{-1} \omega(\eps) t} \, V^n_\eps(y) $$ 
 where 
 $$ \omega(\eps) \: = \: - u_s(a) \: + \: \eps^{1/2} \tau, \quad \mbox{ for some } \: \tau \: \mbox{ with } \: \I \tau < 0, $$
and $U^n_\eps$, $V^n_\eps$ are smooth functions of $y$. These functions are expansions of boundary layer type, made of $\cO(n)$ terms,  with both a regular and a singular part in $\eps$. In particular, one has
$$ \| U^n_\eps \|_{L^2(\RR_+)} \: \ge \: c_n, \quad  \| e^y \, U^n_\eps \|_{H^k(\RR_+)} \: \le \:  C_{n,k} \, ( 1 \: + \:  \eps^{-(k-1)/4} ), \quad \forall k \in \NN  $$
the loss in  $\eps$  being due to the singular part.  By accurate approximate solutions, we mean that 
$\displaystyle  \pa_t u_\eps^n +  \cL_s u_\eps^n \: = \: r_\eps^n, $ with
$$ r^n_\eps(t,x,y) \: = \: e^{i \eps^{-1} x} \, e^{i \eps^{-1} \omega(\eps) t} \, R_\eps^n, \quad \| e^y R_\eps^n \|_{H^k(\RR_+)} \le C_{n,k} \, \eps^n, \quad \forall k. $$
Again, we refer to \cite{GD} for all necessary details. Actually, the construction of \cite{GD}, that deals with a time dependent $u_s$, can be much simplified in the  case of our steady flow $u_s$.  A similar construction will be described at the end of the paper, when dealing with spacial instability. 

\medskip
We can now turn to the proof of the theorem. It ponders on the previous construction and on the use of the closed graph theorem. We refer to Lax \cite{Lax} for similar arguments in the context of geometric optics.  We  argue by contradiction. Let us assume that for some $T > 0$, and for any data $u_0$ with $e^y u_0 \in H^\infty(\T \times \RR_+)$, there is a unique solution of \eqref{prandtl3} 
$$u \in L^\infty(]0,T[; L^2(\T \times \RR_+)), \quad \pa_y u \in  L^2(]0,T[ \times \T \times  \RR_+).$$ 
We can then define  
$$ \mathcal{T} \: : \:  e^{-y} H^\infty(\T \times \RR_+) \:  \mapsto \:  L^\infty(]0,T[; L^2(\TT \times \RR_+)) \times  L^2(]0,T[ \times \T \times  \RR_+) , \quad  
u_0 \mapsto (u, \pa_y u).  $$ 
It is a linear map between Fr\'echet spaces, and it is easy to check that it has closed graph. By the closed graph theorem, we deduce that $\mathcal{T}$ is bounded. This means that for some $K \in \NN$, for all $ u_0 \in e^{-y} H^\infty(\T \times \RR_+)$, 
$$ \sup_{t \in [0,T]} \| u(t) \|_{L^2_{x,y}} \: + \: \| \pa_y u \|_{L^2_{t,x,y}} \: \le \: \mathcal{C} \, \| e^{y} u_0 \|_{H^K_{x,y}}.  $$ 
We quote that this bounds hold for the supremum in time and not only for the essential supremum,  thanks to the weak continuity in time  of $u$ with values in $L^2$. 

Let us define the family of linear operators 
$$ S(t) \: : \:   e^{-y} H^\infty(\T \times \RR_+) \:  \mapsto  \:  L^2(\TT \times \RR_+)), \quad u_0 \: \mapsto \: \left(\mathcal{T}u_0\right)_1(s), \quad t \in [0,T], $$
that is $S(t)u_0 = u(t)$, where $u$ is the unique solution of \eqref{prandtl3} with initial data $u_0$. 
The previous inequality shows that $S(t)$ extends into a bounded linear operator from $e^{-y} H^K$ to $L^2$, with a bound $\mathcal{C}$ independent of $t$. 

We now  introduce $u(t) \: := \:  S(t) u_\eps^n(0)$, and $\: v \: = \: u - u_\eps^n$, where $u^n_\eps$ is the growing function defined above. It satisfies 
\begin{equation} \label{equationv}
 \pa_t   v +  \cL_s  v = - r_\eps^n, \quad v\vert_{t=0} = 0. 
 \end{equation} 
We claim that $v$ has the Duhamel representation
$$ v(t) \: = \: - \int_0^t S(t-s) r^n_\eps(s) \, ds $$ 
Indeed, as $r^n_\eps$ is continuous with values in  $e^{-y} H^\infty(\T \times \RR_+)$, the integral is well-defined, and  straightforward differentiation with respect to $t$ shows that it defines another solution  $\tilde v$ of  \eqref{equationv}. 
Thus, the difference $ w = v - \tilde v$ satisfies $ \pa_t   w +  \cL_s w = 0$, $\: w\vert_{t=0} = 0$, with regularity 
$$w \in L^\infty(]0,T[; L^2(\T \times \RR_+)), \quad \pa_y w \in  L^2(0,T \times \T \times  \RR_+).$$ 
By our uniqueness  assumption for this equation, we obtain $w=0$, that is the Duhamel formula.

On one hand, thanks to the bound on the $S(t)$ and the remainder $r^n_\eps$, we get that 
$$ \| u(t)  \| _{L^2_{x,y}} \: \le \: \mathcal{C} \| e^y \, u^n_\eps(0) \|_{H^K_{x,y}} \: \le \: C_K \, \eps^{-K},   $$
 as well as 
$$ \| v (t) \|_{L^2_{x,y}} \: \le \: \mathcal{C} \, \int_0^t \| e^{y} r^n_\eps(s) \|_{H_{x,y}^K}(s) ds \: \le \:  \,  C_{n,K} \, \eps^{n+\frac{1}{2}-K} \,    
e^{\frac{|\Im \tau| t}{\sqrt{\eps}}}, \quad \forall \, n,K \ge 0.  $$
On the other hand, one has:  
$$ \| u^n_\eps(t) \|_{L^2_{x,y}} \: \ge \:  c_n \, e^{\frac{|\Im \tau| t}{\sqrt{\eps}}}$$ 
by the properties of $u_\eps^n$ recalled below. Combining the last three inequalities, we deduce
$$   C_K \, \eps^{-K} \: \ge \:   \| u(t)  \| _{L^2_{x,y}} \: \ge \:  \| u^n_\eps(t) \|_{L^2_{x,y}} - \| v (t) \|_{L^2_{x,y}}  \: \ge \: \left( c_n  \, - \,  C_{n,K} \,
 \eps^{n+\frac{1}{2}-K} \right)  \, e^{\frac{|\Im \tau| t}{\sqrt{\eps}}}. $$
This yields a contradiction for small enough $\eps$,  if we take $n$ and $t$ such that   
$$n+\frac{1}{2} \:  > \:  K,  \quad t \: > \: \frac{(\ln(C_{n,K}/c_n) + K | \ln\eps | ) \sqrt{\eps}}{|\Im \tau|}.$$    
 So far, we have proved that under  assumption \eqref{specialus},  there is   for any  $T > 0$ an initial data $u_0 \in e^{y} H^\infty(\T \times \RR_+)$ for which  either existence or uniqueness of a solution of \eqref{prandtl3} on $[0,T]$ fails.  To rule out a possible  lack of uniqueness, we further assume that 
 \begin{equation} \label{specialus2}
 \sup_{t\ge 0}\Big(\sup_{y\ge 0}|u_s|+\int_0^\infty y|\D_y u_s|^2dy\Big)< +\infty.
 \end{equation}
 Theorem \ref{theo1} is then a consequence of the following uniqueness result: 
\begin{proposition}\label{theo-uniquelin} 
Let $u_s$ be a smooth shear flow  satisfying \eqref{specialus2}. Let 
$$ w \in L^\infty(]0,T[; L^2_x(\T \times \RR_+)), \quad \pa_y w \in  L^2(0,T \times \T \times  \RR_+).$$ 
a solution of \eqref{prandtl3} with $w\vert_{t=0} = 0$. Then, $w \equiv 0$. 
\end{proposition}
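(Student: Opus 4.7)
The plan is to decompose $w$ along its Fourier series in the periodic variable $x$, writing $w(t,x,y) = \sum_{k \in \ZZ} w_k(t,y)\, e^{ikx}$, and to argue mode by mode. By Parseval, each coefficient satisfies $w_k \in L^\infty(]0,T[; L^2_y(\RR_+))$ and $\pa_y w_k \in L^2(]0,T[\times \RR_+)$, with $w_k|_{t=0} = 0$ and $w_k|_{y=0} = 0$; the divergence-free condition $\pa_x w + \pa_y v = 0$ together with $v|_{y=0}=0$ imposes $v_k(t,y) = -ik \int_0^y w_k(t,y')\,dy'$. Inserting into \eqref{prandtl3}, the system collapses, for every $k \in \ZZ$, to the one-dimensional parabolic equation
\begin{equation*}
\pa_t w_k \: - \: \pa_y^2 w_k \: = \: -\, ik\, u_s\, w_k \: - \: u_s'\, v_k, \qquad w_k|_{t=0}=0, \quad w_k|_{y=0}=0.
\end{equation*}

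For each fixed $k$, I would then test against $\bar w_k$, integrate in $y$, and take the real part. The convective contribution $\R(ik) \int_0^\infty u_s |w_k|^2\, dy = 0$ vanishes; the dissipative term produces $\|\pa_y w_k\|_{L^2_y}^2$, all boundary contributions being killed by $w_k(t,0)=0$ and the $H^1_y$ decay at infinity. The identity reduces to
\begin{equation*}
\frac{1}{2}\, \frac{d}{dt} \|w_k(t)\|_{L^2_y}^2 \: + \: \|\pa_y w_k(t)\|_{L^2_y}^2 \: = \: -\, \R \int_0^\infty u_s'(y)\, v_k(t,y)\, \overline{w_k(t,y)}\, dy.
\end{equation*}
The crucial estimate is that, since $|v_k(t,y)| \le |k|\, y^{1/2}\,\|w_k(t)\|_{L^2_y}$ by Cauchy--Schwarz applied to the explicit formula for $v_k$, a second Cauchy--Schwarz combined with assumption \eqref{specialus2} yields
\begin{equation*}
\Big|\int_0^\infty u_s'\, v_k\, \bar w_k\, dy\Big| \: \le \: |k|\, \|w_k\|_{L^2_y} \Big(\int_0^\infty y\, |u_s'|^2\, dy\Big)^{1/2} \|w_k\|_{L^2_y} \: \le \: C\, |k|\, \|w_k\|_{L^2_y}^2.
\end{equation*}
Dropping the dissipation and applying Gronwall's lemma, together with $w_k(0,\cdot) = 0$, then forces $w_k \equiv 0$ for every $k$, hence $w \equiv 0$.

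The main technical obstacle is the rigorous justification of the energy identity under the low regularity $w_k \in L^\infty_t L^2_y \cap L^2_t H^1_y$ alone. The same Cauchy--Schwarz argument as above, where hypothesis \eqref{specialus2} is indispensable, shows that $\|u_s' v_k(t)\|_{L^2_y} \le C|k|\, \|w_k(t)\|_{L^2_y}$, so the right-hand side of the modewise equation sits in $L^\infty_t L^2_y$. Recombining with $\pa_y^2 w_k \in L^2_t H^{-1}_y$ gives $\pa_t w_k \in L^2(]0,T[; H^{-1}_y)$; the classical Lions--Magenes duality lemma then grants that $t \mapsto \|w_k(t)\|_{L^2_y}^2$ is absolutely continuous with the expected derivative and legitimates the integration by parts in $y$, the boundary contributions vanishing either by the trace at $y=0$ or by the $L^1$ Sobolev control of $|w_k|^2$ at infinity. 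Once this is in place, the modewise Gronwall argument closes the proof.
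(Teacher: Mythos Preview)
Your proof is correct and follows essentially the same route as the paper: Fourier decomposition in $x$, the modewise energy identity, the Cauchy--Schwarz bound $|v_k(t,y)| \le |k|\,y^{1/2}\|w_k(t)\|_{L^2_y}$ combined with the weighted hypothesis \eqref{specialus2}, and Gronwall. Two small differences are worth noting: you eliminate the convective term $ik\,u_s\,w_k$ by taking the real part (the paper instead bounds it crudely by $|k|\sup|u_s|\,\|w_k\|^2$, which is why the $\sup|u_s|$ appears in \eqref{specialus2}), and you include the Lions--Magenes justification of the energy identity that the paper leaves implicit.
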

\begin{proof} Let us define   $\hat w_k(t,y)$, $k\in \ZZ$, the Fourier transform of $w(t,x,y)$ in $x$ variable. We observe that for each $k$, $\hat w_k$ solves
\begin{equation}\label{eqs-wk}\left\{\begin{array}{rll}\D_t \hat w_k + ik u_s \hat w_k - ik \D_yu_s \int_0^y \hat w_k (y')dy' - \D_y^2 \hat w_k &=&0\\\hat w_k(t,0) &=& 0\\\hat w_k(0,y)&=&0.\end{array}\right.\end{equation}

Taking the standard inner product of the equation \eqref{eqs-wk} against the complex conjugate of $\hat w_k$ and using the standard Cauchy--Schwartz inequality to the term $\int_0^y\hat w_kdy'$, we obtain
$$\begin{aligned}\frac 12 \dt\|\hat w_k\|^2_{L^2(\RR_+)} + \|\D_y \hat w_k\|^2_{L^2(\RR_+)} &\le |k|\int_0^\infty|u_s||\hat w_k|^2 dy + |k|\int_0^\infty|\D_yu_s|y^{1/2}|\hat w_k| \|\hat w_k\|_{L^2(\RR_+)}dy\\&\le |k|\Big(\sup_{t,y}|u_s|+\int_0^\infty y|\D_yu_s|^2dy\Big)\|\hat w_k\|^2_{L^2(\RR_+)}.\end{aligned}$$
Applying the Gronwall lemma into the last inequality yields $$
\|\hat w_k(t)\|_{L^2(\RR_+)} \le C e^{C|k|t} \|\hat w_k(0)\|_{L^2(\RR_+)},$$ for some constant $C$.
Thus, $\hat w_k(t)\equiv 0$ for each $k\in \ZZ$ since $\hat w_k(0)\equiv 0$. That is, $w\equiv 0$, and the theorem is proved.
\end{proof}

\section{The nonlinear IVP} \label{sec2}
With Theorem \ref{theo1} at hand, we can turn to the nonlinear statement of Theorem \ref{theo2}. We shall make use of  an idea of Guo and Tice \cite{GT} on deriving a nonlinear instability  from a strong linearized one. Note that the nonlinear equation in \eqref{prandtl2} reads 
$\pa_t u + \cL_s u = N(u),$ with  $\cL_s$ as in the previous section, and nonlinearity
 $N(u):=-u\D_xu - v\D_yu$.

\medskip
We now prove the theorem by contradiction. Let $u_s$ be as in Theorem \ref{theo1}. Assume that the IVP \eqref{prandtl2}-\eqref{IVP} is $(H^{m}, H^1)$ locally Lipschitz well-posed for some $m \ge 0$. Let $C, \delta_0, T$ be the constants given in the definition of Lipschitz 
well-posedness. By Theorem~\ref{theo1}, there is an initial data $u_0 \in e^{-y} H^\infty(\T \times \RR_+)$ that does not generate any solution $u$ of the linearized equation \eqref{prandtl3} with 
$$u \in L^\infty(]0,T[; L^2_x(\T \times \RR_+)), \quad \pa_y u \in  L^2(]0,T[ \times \T \times  \RR_+).$$  
Up to multiplication by a constant, we can assume  $\| e^y u_0 \|_{H^m} \: = \: 1$. Let us take $v_0^\delta \: := \:  \delta u_0$, with $\delta$ a small parameter less than $\delta_0$. By the Lipschitz well-posedness hypothesis, there is a  solution $v^\delta$ of \eqref{prandtl2} on $[0,T]$ with initial data $v_0^\delta$. Moreover, 
$$ \mbox{ess}\sup_{t \in [0,T]}\| v^\delta(t) \|_{H^1_{x,y}} \: \le \: C \, \delta.$$
 In other words, $u^\delta = v^\delta/\delta$ is  bounded in $L^\infty(0,T; H^1)$  uniformly with respect to $\delta$, and moreover 
\begin{equation}\label{eqs-w}
\D_t u^{\delta} + \cL_s u^{\delta} = \delta N(u^{\delta}), \qquad u^{\delta}(0,x,y) = u_0. 
\end{equation}
From the  bound on $u^\delta$, we deduce that, up to  a subsequence, 
$$ u^\delta \rightarrow u \quad  L^\infty(0,T; H^1(\T \times \RR_+)) \mbox{ weak *  as } \delta \rightarrow 0. $$ 
Furthermore, the nonlinearity $\delta N(u^\delta)$ goes to zero strongly in $L^\infty(0,T; W^{-1,p }_{loc})$, for all $p < 2$.  We end up with 
$$ \pa_t u + \cL_s u \: = \:  0, \quad u\vert_{t=0} = u_0.  $$
As 
$$u \in L^\infty(]0,T[; L^2(\T \times \RR_+)), \quad \pa_y u \in  L^2(]0,T[ \times \T \times  \RR_+).$$ 
this contradicts the result of  non-existence of solutions starting from $u_0$.

\section{Spacial instability}\label{sec3}
This section is devoted to the boundary value problem for the linearized Prandtl equation. 
We assume $U > 0$, and consider some shear flow   $u_s $ with  $u_s(y) > 0$ for $ \: y > 0$. As before,  we assume $u_s - U \in C^\infty_c(\RR_+)$, and 
$$ u_s \: = \: u_s(a) + u''_s(a) \frac{(y-a)^2}{2} \: \mbox{ in the vicinity  of some } \:  a > 0, \quad  \mbox{ with } \: u''_s(a) < 0.  $$ 
As the time $t$ and the space $x$ are somewhat symmetric in the Prandtl equation, we may adapt the construction of the unstable quasimode performed in \cite{GD}. We sketch this construction in the next paragraph, and  then turn to the proof of Theorem \ref{theo3}.

\subsection{The unstable quasimode} 
The aim of this paragraph is  to construct an approximate solution of \eqref{prandtl3}, that has high time frequency $\eps^{-1}$ and grows exponentially for positive $x$ at rate $\sqrt{\eps}^{-1}$. 
We look for growing solutions in the form
\begin{equation} \label{Fourier}
u(t,x,y) = e^{-it/\e - i\w(\e) x/\e}  u_\e(y),\qquad v(t,x,y) = \e^{-1} e^{-it/\e-i\w(\e)x/\e}  v_\e(y), \qquad \e>0.
\end{equation}
We plug the Ansatz into \eqref{prandtl3},  and eliminate $u_\e$ by the divergence free  condition $u_\e = - \frac{i v_\e'}{\w(\e)}$. We end up with
\begin{equation}\label{spectral-sys}\left\{\begin{array}{rll} (1+\w(\e)u_s)v_\e' - \w(\e) u_s' v_\e - i\e v_\e^{(3)}=0,& \quad y>0\\ {v_\e}_{|y=0}={v_\e'}_{|y=0}=0.&\end{array}\right.\end{equation}
Introducing  the notations
$$\tilde \omega(\eps) \: := \: \omega(\eps)^{-1}, \quad  \tilde \eps \: := \:   - \omega(\eps)^{-1} \eps,$$
it reads
\begin{equation}\label{spectral-sys2}\left\{\begin{array}{rll} 
(\tilde \w(\e) + u_s)v_\e' - u_s'v_\e + i \tilde\e v_\e^{(3)}=0,& \quad y>0\\ {v_\e}_{|y=0}={v_{ \e}'}_{|y=0}=0.&\end{array}\right.\end{equation}
Thus,  the equation gets formally close to the equation
\begin{equation}\label{spectral-sys3}\left\{\begin{array}{rll} 
(\w(\e) + u_s)v_\e' - u_s'v_\e + i\e v_\e^{(3)}=0,& \quad y>0\\ {v_\e}_{|y=0}={v_\e'}_{|y=0}=0,&\end{array}\right.\end{equation}
which has been studied in  \cite{GD}, in connection to the IVP for \eqref{prandtl3}. More precisely, the authors build  an approximate solution of \eqref{spectral-sys3} under the form
\begin{equation} \label{ansatz2}
\left\{
\begin{aligned}
\omega^{app}(\eps) \: & =  \: -u_s(a) \: + \: \eps^{1/2} \tau, \\
v_\eps^{app}(y) \: &  \: =  \, H(y-a) (u_s + \omega(\eps))   \: + \:  \eps^{1/2}  V\left( \frac{y-a}{\eps^{1/4}} \right). 
\end{aligned}
\right.
\end{equation} 
 The streamfunction $v^{app}_{\eps}$ divides into two parts: a regular part 
$$\displaystyle v_\eps^{reg}(y) \: := \: H(y-a) (u_s + \omega(\eps)),$$
 and a "shear layer part" 
$\displaystyle v_\eps^{sl}(y) \: := \: \eps^{1/2}  V\left( \frac{y-a}{\eps^{1/4}} \right)$. 
Note that $(\omega^{app}(\eps), v_\eps^{reg})$  solves \eqref{prandtl3} except for the $\cO(\eps)$ term coming from  diffusion. However, both $v_\eps^{reg}$ and its second derivative have discontinuities at $y=a$. This  explains the introduction of $v_\eps^{sl}$. The shear layer profile $V = V(z)$ satisfies the system 
\begin{equation} \label{SL}
\left\{
\begin{aligned}
& \left(\tau + u_s''(a) \frac{z^2}{2}\right) V' \: - \:  u_s''(a)  \, z \, V \: +
\: i  \,  V^{(3)} \: = \: 0, \quad  \: z \neq 0, \\ 
& \left[ V \right]_{\vert_{z=0}} \: = \:  - \tau, \quad  \left[ V' \right]_{\vert_{z=0}} \: =
\: 0, \quad  \left[ V'' \right]_{\vert_{z=0}} = - u''(a), \\
& \lim_{\pm\infty} V \: = \: 0. 
\end{aligned}
\right.
\end{equation}
The jump conditions on $V$ compensate those of the regular part. The parameter $\tau$ allows the system not to be overdetermined. Indeed, one shows  that for some appropriate $\tau$ with $\Im \tau < 0$, \eqref{SL} has a solution, with rapid decay to zero as $z \rightarrow \pm \infty$. This singular perturbation is responsible for the instability, through the eigenvalue perturbation $\tau$. We refer to \cite{GD} for all necessary details.  
Note that writing 
 $$ \tilde V(z) \: = \: V(z) \: + \: {\bf 1}_{\RR_+} \left(  \tau + u_s''(a)
\frac{z^2}{2} \right),$$
one  gets rid of the jump conditions: 
\begin{equation} 
\left\{
\begin{aligned} \label{eqtildeV}
& \left(\tau + u_s''(a) \frac{z^2}{2}\right)  \tilde V' \: - \:  u_s''(a)
  \, z \,  \tilde V \: + 
\: i  \,  \tilde V^{(3)} \: = \: 0, \quad z \in \RR, \\ 
& \lim_{z \rightarrow -\infty}  \tilde V \: = \: 0, \qquad 
  \tilde V \: \sim \: \tau + u_s''(a) \frac{z^2}{2} \: \mbox{ as } \: z \rightarrow +\infty 
\end{aligned}
\right.
\end{equation}
Accordingly, the approximate solution \eqref{ansatz2}  takes the slightly simpler form
\begin{equation} \label{ansatz3}
\left\{
\begin{aligned}
\omega^{app}(\eps) \: & =  \: -u_s(a) \: + \: \eps^{1/2} \tau, \\
v_\eps^{app}(y) \: &  \: =  \, H(y-a) \left(u_s - u_s(a) -  u''_s(a) \frac{(y-a)^2}{2}\right)  \: + \:  \eps^{1/2}  \tilde V\left( \frac{y-a}{\eps^{1/4}} \right). 
\end{aligned}
\right.
\end{equation}

\medskip
On the basis of this former  analysis, it is tempting to consider for our system \eqref{spectral-sys2} the following expansion:
 \begin{equation} \label{ansatz4}
\left\{
\begin{aligned}
\tilde \omega^{app}(\eps) \: & =  \: -u_s(a) \: + \: \tilde\eps^{1/2} \tau, \\
v_\eps^{app}(y) \: &  \: =  \, H(y-a) \left(u_s - u_s(a) - u''_s(a) \frac{(y-a)^2}{2}\right)  \: + \: \tilde \eps^{1/2}  \tilde V\left( \frac{y-a}{\tilde \eps^{1/4}} \right). 
\end{aligned}
\right.
\end{equation} 
However, it  is not so straightforward: 
\begin{itemize}
\item Back to the initial notations, the first relation reads 
 \begin{equation} \label{eqomegaeps}
 \omega^{app}(\eps)^{-1} \:  =  \: -u_s(a) \: + \: (-\omega^{app}(\eps)^{-1}\eps)^{1/2} \tau. 
 \end{equation} 
 In particular, it is no longer an {\em explicit} definition of $\omega^{app}(\eps)$. It is an equation for  $\omega^{app}(\eps)$, and we must check that 
 the equation defines it {\em implicitly}.
\item  Assuming that $\omega^{app}(\eps)$ is defined,  $\tilde \eps$ is some unreal complex number. Hence, $v^{app}_\eps$  is not {\it a priori}  properly defined, because $\tilde V$ 
no longer has a real argument.
\end{itemize}
Fortunately, as we will now show, these difficulties can be solved, leading to the desired unstable quasimode. 
 
 \medskip
We start with equation \eqref{eqomegaeps}. A standard application  of the implicit function theorem implies that for $\eps$ small enough,  the complex equation
$$ F(z) = 0, \quad  F(z) \: := \: z + u_s(a) - (-\eps z)^{1/2}  \tau$$
(where $z^{1/2}$ is  the principal value of the square root) has a unique solution near $-u_s(a)$. This allows to define $\omega^{app}(\eps)^{-1}$, so $\omega^{app}(\eps)$. Furthermore,  easy  computations  yield   
\begin{equation} \label{omegaepsapp}
\omega^{app}(\eps) \: = \: -\frac{1}{u_s(a)} - \eps^{1/2}  \frac{\tau}{u_s(a)^{3/2}} + o(\eps^{1/2}). 
\end{equation} 
Note that  injecting this expression in  \eqref{Fourier}  gives some exponential growth  in $x$ at rate $\eps^{-1/2}$. 

\medskip
It remains  to clarify  the definition of $v_\eps$ in \eqref{ansatz4}. Of course,  the complex powers $\tilde \eps^{1/2}$, $\tilde \eps^{1/4}$ can still be defined. The problem lies in the profile $\tilde V = \tilde V(z)$, which is  a function over $\RR$, instead of $(-\omega^{app}(\eps))^{-1/4} \RR$. 
To overcome this problem, we will show that $\tilde V$  extends  holomorphically to  the  neighborhood of the real line
$$U_\tau \: :=  \:  \C\setminus \left(  i(-\tau)^{1/2}[-\infty, -1] \cup  i(-\tau)^{1/2}[1, +\infty] \right).$$  
The extension will satisfy the ODE in \eqref{eqtildeV} over $U_\tau$,  and will have the following asymptotic behaviour, for small $\theta \ge  0$:
$$\lim_{z \in e^{i\theta} \RR,  \: z \rightarrow  -e^{i \theta} \infty} \tilde V = 0, \qquad V \: \sim \: \tau + u_s''(a) \frac{z^2}{2} \: \mbox{ as } \: z \in e^{i \theta} \RR, \:\:  z  \rightarrow +e^{i\theta} \infty.  $$ 
In particular, $\tilde V$ will satisfy all the requirements along the line  $(-\omega(\eps))^{1/4} \RR$, for $\eps$ small enough.

\medskip
To perform our extension, we  write $\tilde V$ as  
$$ \tilde V(z) \: := \:  \left(\tau + u''_s(a) \frac{z^2}{2} \right) W(z) $$
where $W$ satisfies 
\begin{equation*} 
\left\{
\begin{aligned}
& \left(\tau + u_s''(a) \, z^2/2 \right)^2   \frac{d}{dz} W \: +
\: i \, \frac{d^3}{dz^3} \left(  \left( \tau + u_s''(a)
\, z^2/2 \right) \,  W \right)  \: = \: 0, \\ 
& \lim_{-\infty}   W \: = \: 0, \quad \lim_{+\infty}
  W \: = \: 1.
\end{aligned}
\right.
\end{equation*}
Then, performing the changes of variables 
$$ \tau \: = \: \frac{1}{\sqrt{2}}|u_s''(a)|^{1/2} \, \tilde \tau, \quad z \: =
  2^{1/4} \, |u_s''(a)|^{-1/4}  \tilde z $$
leaves us with 
\begin{equation*} 
\left\{
\begin{aligned}
& \left(\tilde \tau  -  \tilde z^2 \right)^2   \frac{d}{d\tilde z} \tilde W \: +
\: i\, \frac{d^3}{d\tilde z^3} \left(  \left( \tilde \tau -  \tilde z^2 \right) \,  \tilde W \right)  \: = \: 0, \\ 
& \lim_{-\infty}   \tilde W \: = \: 0, \quad \lim_{+\infty}
  \tilde W \: = \: 1.
\end{aligned}
\right.
\end{equation*}
This system already appears  in \cite{GD}. It is shown that for some appropriate $\tilde \tau$ with negative imaginary part, there exists some solution $\tilde W$.  Actually, much more is known on $\tilde W$.  Indeed,  $X = \tilde W'$  satisfies the ODE with holomorphic coefficients 
$$i (\tilde \tau - \tilde z^2)  \, X'' \: - \:   6 i \, \tilde z  \, X'' \: + \:  (\tau - \tilde z^2)^2 \: - \:  6 i \,  X = 0, $$
In particular, the coefficient of the leading order term does not vanish in $U_\tau$. 
Thus, $X$ can be extended into a holomorphic solution in $U_\tau$.  
Moreover, the previous equation on $X$ can be put into the  form of a first order system:
\begin{equation} \label{firstorder}
\frac{d}{d\tilde z} \mathcal{X} \: = \:  \tilde z \mathcal{B}( \tilde z) \mathcal{X}, \quad 
 \mathcal{X} \: = \: \left( \begin{smallmatrix}  X \\ \tilde z^{-1}
  \frac{d}{d\tilde z} X
 \end{smallmatrix} \right), 
  \quad  \mathcal{B}(\tilde z) \: = \: \left( \begin{smallmatrix} 0 & 1 \\ \frac{6 +
      i (\tau - \tilde z^2)^2}{ \tilde z^2 (\tau - \tilde z^2)}  & \frac{6}{\tau - \tilde z^2} -
    \frac{1}{\tilde z^2} \end{smallmatrix} \right).
\end{equation}
In particular,  $ \mathcal{B}$ is holomorphic at infinity, with
$\displaystyle   \mathcal{B}(\infty) \: = \: \left( \begin{smallmatrix} 0 & 1 \\ -i  &
  0 \end{smallmatrix} \right)$. It has the two distinct eigenvalues $\pm i e^{i \pi/4}$.  From there, it is possible to obtain explicit  asymptotic expansions for $X$ at infinity. Following  \cite[Theorem 5.1 p163]{Cod:1955},  in any closed sector  $\displaystyle \overline{S}_{\theta,\theta'}$ 
     inside which $\displaystyle \R (i e^{i \pi/4} z^2)$ does not cancel, there are  solutions  $\mathcal{X}_\pm$ (depending {\it a priori}
on $\theta,\theta')$  with the following asymptotic behaviour:  
\begin{equation} 
  \mathcal{X}_{\pm}   \: \sim  \:  \left(  \sum_{i\ge 0}   \mathcal{X}^i_\pm \, \tilde z^{\alpha_{\pm}}  \right) \, 
e^{P_{\pm}(\tilde z)}, \:  \quad |\tilde z| \rightarrow  +\infty, \quad \tilde z \in \overline{S}_{\theta,\theta'},  
\end{equation}
where $\alpha_{\pm}$ is a complex constant, and  
$P_{\pm}(\tilde z)$ is a polynomial of degree 2. Moreover, the  leading term of
$\displaystyle P_{\pm}$ is $\pm \frac{i e^{i\pi/4} \tilde z^2}{2}$. In the present case, tedious computations yield 
$$  \mathcal{X}_\pm(\tilde z)  \: \sim \:  C_\pm \, \tilde z^{\alpha_\pm}  \, \exp\left(\pm i e^{i\pi/4} \frac{\tilde z^2}{2}\right), \quad   \mbox{ for some } \: C_+, \alpha_+ \in \CC, $$
as $|\tilde z| \rightarrow + \infty$, in any  closed sector that is strictly inside $S_{-\frac{\pi}{8}+ \frac{k\pi}{2},\frac{3 \pi}{8}+ \frac{k\pi}{2}}$ for some $k \in \ZZ$.  As $X$ decays to zero in 
$\pm \infty$, we deduce that: for all $\delta > 0$, 
$$ |X(\tilde z)| \: \le \:  C \exp(-\alpha |\tilde z|^2), \mbox{ for some } \alpha  > 0, \quad z \in \overline{S}_{-\frac{\pi}{8}+\delta,\frac{3 \pi}{8}-\delta} \cup \overline{S}_{\frac{7\pi}{8}+\delta,\frac{11 \pi}{8}-\delta}.$$
This implies easily  that the antiderivative $\tilde W$ (now defined in $U_\tau$) satisfies 
$$ |\tilde W(\tilde z) - 1| \le C \exp(-\alpha' |\tilde z|^2),  \mbox{ uniformly in } \: \overline{S}_{-\frac{\pi}{8}+\delta,\frac{3 \pi}{8}-\delta} ,  $$
and similarly
$$ |\tilde W(\tilde z)| \le C \exp(-\alpha' |\tilde z|^2),  \mbox{ uniformly in } \:   \overline{S}_{\frac{7\pi}{8}+\delta,\frac{11 \pi}{8}-\delta}. $$
From there, one can go back to  $W$, and then to $\tilde V$. 

\medskip
In this way, we obtain an appropriate approximate solution of 
\eqref{spectral-sys2}.
 Exactly  as in \cite{GD}, we can refine this approximation by the addition of higher order "regular" terms in  $v^{app}_\eps$. Namely, we consider an expansion of the form
\begin{equation*}
 v_\eps^{app}(y)   \: =  \, H(y-a) \left(u_s - u_s(a) - u''_s(a) \frac{(y-a)^2}{2}\right)  \: + \: \tilde \eps^{1/2}  \tilde V\left( \frac{y-a}{\tilde \eps^{1/4}} \right) 
  +  \sum_{i=2}^n \tilde{\eps}^{i/2} v_{i,reg}(y) 
 \end{equation*}
Each additional term  $v_{i,reg}$  satisfies a first order equation of the type 
 $$   (\tilde \w(\e) + u_s) v'_{i,reg} - u_s' v_{i,reg} \: =  \: f^i,  $$
where $f^i$ comes from lower order terms. For example, $f^2 \: := \: H(y-a) \pa^3_y u_s$. As explained in \cite{GD}, due to the quadratic structure of $u_s$ near $a$, one can show by an easy induction that $f^i$ is identically zero near $a$, so that 
$$ v_{i,reg} \:  = \: H(y-a) \Bigl( u_s(y) + \tilde \omega(\eps) \Bigr)
\int_{a}^y \frac{f^i}{\left(u_s(z) + \tilde \omega(\eps)\right)^2} \, dz,$$  
is smooth across $y = a$. 

Back to \eqref{Fourier}, we obtain some approximate solution of \eqref{prandtl3}, with frequency $\eps^{-1}$ in $t$, that grows exponentially with $x > 0$ at rate $ \eps^{-1/2}\sigma$, 
$$\sigma \, :=  \, \frac{|\I(\tau)|}{ u_s(a)^{3/2}}.$$
This approximation solves \eqref{prandtl3} up to some $\cO\left(\eps^{n-k}  e^{\sigma x/\eps^{1/2}}\right)$ error term in  $e^{-y} H^k$ for all $k$. 
 Again, we refer to \cite {GD} for all details.

\subsection{Proof of spacial ill-posedness}
Thanks to the spacially unstable quasimode  from  the last paragraph, one  can try to  mimic  the proof of Theorem \ref{theo1}, to derive Theorem \ref{theo3}.  Indeed,  all arguments related to  the closed graph theorem adapt straightforwardly, as well as those disproving   the  continuity of the flow if it exists. In other words, we leave to the reader to check the following fact: for any $X > 0$, there exists some boundary data $\displaystyle u_1 \in e^{-y} H^\infty(\T \times \RR_+)$, for which either existence or uniqueness of a solution $u$  with 
$$u_s \, u \in L^2_t(\T; C_x([0,X] ; H^2_y(\RR_+))), \quad  u  \in L^2_{t,x}(\T \times (0,X); H^2_y(\RR_+)).$$
fails.  

\medskip
To complete the proof of the Theorem, it remains to show some uniqueness result for the BVP in this functional setting. It turns out to be more difficult than for the IVP.  Therefore, we  strengthen the assumptions on  $u_s$: besides all assumptions needed for the construction of the quasimode, we make the following hypothesis: the function $u_s$, which belongs to  $U + C^\infty_c(\RR_+)$, satisfies 
$$ u_s(y)  = y, \quad \mbox{ in the vicinity  of } \: y=0. $$
 Under this additional hypothesis, we are able to state
 \begin{proposition}[Uniqueness for the BVP] \label{uniqueBVP}
 Let $u$ be a weak
   solution of \eqref{prandtl3} such that 
 $$u_s \, u \in L^2_t(\T; C_x([0,X] ; H^2_y(\RR_+))), \quad  u  \in L^2_{t,x}(\T \times (0,X); H^2_y(\RR_+)), $$
and $u\vert_{x=0} = 0$. Then $u \equiv 0$. 
\end{proposition}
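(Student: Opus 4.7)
The proof mirrors that of Proposition~\ref{theo-uniquelin} with the roles of $t$ and $x$ interchanged. First, I would expand in Fourier series in the periodic variable $t$: writing $u(t,x,y)=\sum_{k\in\ZZ}\hat u_k(x,y)e^{ikt}$, each mode satisfies
\[
ik\hat u_k + u_s\partial_x\hat u_k + u_s'\hat v_k - \partial_y^2\hat u_k = 0, \qquad \partial_y\hat v_k = -\partial_x\hat u_k,
\]
with $\hat u_k|_{y=0}=\hat v_k|_{y=0}=0$ and $\hat u_k|_{x=0}=0$; it then suffices to show $\hat u_k\equiv 0$ for every $k$.

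The central tool is the Oleinik-type substitution $\psi_k:=\hat u_k/u_s$, which is well-defined thanks to the hypothesis $u_s(y)=y$ near $y=0$ combined with $\hat u_k|_{y=0}=0$. Using the identity $\partial_y^2(u_s\psi_k)=u_s''\psi_k + u_s^{-1}\partial_y(u_s^2\partial_y\psi_k)$ and multiplying the equation through by $u_s$, one arrives at the divergence-form equation
\[
iku_s^2\psi_k + u_s^3\partial_x\psi_k - \partial_y(u_s^2\partial_y\psi_k) - u_su_s''\psi_k + u_su_s'\hat v_k = 0,
\]
with $\hat v_k(y)=-\int_0^y u_s(y')\partial_x\psi_k(x,y')\,dy'$.

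The energy estimate comes from multiplying by $\overline{\psi_k}$, integrating over $y\in\RR_+$, and taking real parts. The $ik$-term drops, the transport gives $\tfrac12\partial_x\int u_s^3|\psi_k|^2\,dy$, and the diffusion contributes $\int u_s^2|\partial_y\psi_k|^2\,dy$ (boundary terms vanishing at $y=0$ because $u_s(0)=0$ and at $y=\infty$ by decay). The zeroth-order term $u_su_s''$ is compactly supported away from $y=0$ on a set where $u_s$ is bounded below, so $\int u_su_s''|\psi_k|^2\,dy \le C\int u_s^3|\psi_k|^2\,dy$. The nonlocal cross term $\R\int u_su_s'\hat v_k\overline{\psi_k}\,dy$ is processed by writing $u_su_s'=\tfrac12(u_s^2)'$ and integrating by parts in $y$; using $\partial_y\hat v_k=-u_s\partial_x\psi_k$, one extracts an additional $\tfrac14\partial_x\int u_s^3|\psi_k|^2\,dy$ and leaves a remainder $\tfrac12\R\int u_s^2\hat v_k\partial_y\overline{\psi_k}\,dy$.

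The main obstacle is to control this remainder, because $\hat v_k$ is nonlocal and does not lie in $L^2_y(\RR_+)$ a priori. Here the full shape of $u_s$ becomes essential: since $u_s'$ has compact support in $(0,\infty)$ and $u_s(y)=y$ near $0$, one may split $\hat v_k$ into its asymptotic value as $y\to\infty$ plus an integrable tail, and combine this with a Hardy-type inequality near $0$ to establish a bound of the form $\int u_s^2|\hat v_k|^2\,dy \le \eta\int u_s^2|\partial_y\psi_k|^2\,dy + C_\eta\int u_s^3|\psi_k|^2\,dy$ for arbitrary $\eta>0$. Absorbing the first summand into the diffusion then produces
\[
\partial_x E_k(x) \le C_k E_k(x), \qquad E_k(x):=\int_0^\infty u_s^3|\psi_k(x,y)|^2\,dy,
\]
with $E_k(0)=0$. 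Gronwall's inequality in $x$ forces $E_k\equiv 0$ on $[0,X]$, hence $\hat u_k\equiv 0$ for every $k\in\ZZ$, and finally $u\equiv 0$.
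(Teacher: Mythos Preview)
Your argument breaks down at the step where you claim
\[
\int_0^\infty u_s^2\,|\hat v_k|^2\,dy \;\le\; \eta\int_0^\infty u_s^2\,|\partial_y\psi_k|^2\,dy \;+\; C_\eta\int_0^\infty u_s^3\,|\psi_k|^2\,dy.
\]
This inequality cannot hold. You have correctly written $\hat v_k(y)=-\int_0^y u_s(y')\,\partial_x\psi_k(x,y')\,dy'$, so the left-hand side is quadratic in $\partial_x\psi_k$, while the right-hand side involves only $\psi_k$ and $\partial_y\psi_k$ at the \emph{same} value of $x$. No splitting of $\hat v_k$ into an asymptotic value plus a tail, and no Hardy inequality in $y$, can manufacture a bound on an $x$-derivative out of purely $y$-derivative quantities. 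Consequently the remainder $\tfrac12\,\Re\int u_s^2\,\hat v_k\,\partial_y\overline{\psi_k}\,dy$ is not controlled, and the Gronwall argument collapses.

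The underlying structural issue is that the Oleinik substitution $\psi_k=\hat u_k/u_s$ does \emph{not} absorb the nonlocal term $u_s'\hat v_k$; it only recasts the diffusion. The paper handles this differently: it introduces the multiplier $Lu:=u_s u-u_s'\int_0^y u$, which is designed precisely so that $\partial_x(Lu)=u_s\partial_x u+u_s'v$ absorbs the $v$-term completely, yielding $\partial_x(Lu)=-\partial_t u+\partial_y^2 u$ with no residual nonlocal contribution. One then performs $H^2_y$ estimates on $Lu$ (the $L^2$ level alone is insufficient), combined with an auxiliary first-order multiplier $L'u:=\partial_y u-(u_s''/u_s)\int_0^y u$ to recover control on $\|\partial_y^2 u\|_{L^2}$, and a lemma translating $H^2_y$ bounds on $Lu$ back to bounds on $u$ via the explicit inversion of $L$ (this is where the hypothesis $u_s(y)=y$ near $0$ is used through a Hardy inequality). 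Only after all of this does Gronwall in $x$ close.
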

\begin{proof}
Up to a replacement of
$u$ by $P_N u$, where $P_N$   is the projector on  temporal Fourier modes  less than $N$, we can assume that
\begin{equation} \label{deriveestemps}
 |\pa_t^s \pa^\alpha_x \pa^\beta_y u(t,x,y)| \:  \le \: C_s \,     | \pa^\alpha_x \pa^\beta_y u(t,x,y)| , \quad \forall t,x,y, \quad \forall \alpha, \beta.  
 \end{equation}

Our key idea to get uniqueness for equation \eqref{prandtl3} is to write it as   
$$ \pa_x L u = -\pa_t u +  \pa_y^2 u, \quad \mbox{ where } \:  L u \: :=  \: u_s u \: - \:  u'_s \int_0^y u,   $$
and to  use $Lu$ and some variants of it as multipliers. Denoting by $( \, | \, )$ the scalar product in $L^2(\RR_+)$, we have for $k=0,1,2$:  
\begin{equation} \label{eqLu}
 \frac{1}{2}\pa_x \| \pa_y^k Lu \|^2_{L^2(\RR_+)} = (-\pa_t \pa_y^k u | \pa_y^k Lu ) \: + \:    (\pa^{k+2}_y u | \pa_y^k Lu) \: := \: I_{k,1} \: + \: I_{k,2}. 
 \end{equation}
We estimate those two terms separately, starting with $I_{k,1} = I_{k,1}(t,x)$. 
Let  $M$ be a constant such that
 $u_s = U$ for all $y \ge M$. We claim that : $ \forall \eps > 0,  \: \forall k =0,1,2, $
\begin{equation} \label{I1k}
 \int_{\T} I_{k,1}(t) \, dt  \: \le \: C_\eps \int_\T \left( \| u(t) \|^2_{L^2(0,M)} \: + \:   \| \pa_y u(t) \|^2_{L^2(0,M)} \right) dt \: + \: \eps \int_\T \| \pa_y^2 u(t) \|^2_{L^2(0,M)} dt.   
\end{equation}
Note that, for clarity,  we omit to indicate the dependance on  $x$ of the various quantities. For brevity, we  just  prove our claim  in  
 the case $k=2$, which is the most involved: one has 
 \begin{align*}
 I_{2,1} \:  = \:  -(\pa_t \pa_y^2 u | \pa_y^2 Lu ) \: & = \:  -\int_{\RR_+}  \pa_t \pa_y^2 u  \left( u_s \pa^2_y u + u'_s \pa_y u - u^{(3)}_s \int_0^y u - u''_s u \right) dy   \\
& = \:  -\frac12\frac{d}{dt}  \int_{\RR_+}  u_s | \pa^2_y u_s |^2  dy -   \int_{\RR_+}   \pa_t \pa_y^2 u  \left(  u'_s \pa_y u - u^{(3)}_s \int_0^y u - u''_s u \right) dy.   
\end{align*}
Integration in time makes the first term at the r.h.s. vanishes. The other  terms involve derivatives of $u_s$, so that the integrals over $\RR_+$ can be replaced by integrals over $(0,M)$. Using Cauchy-Schwarz inequality, we end up with 
\begin{align*}
 \int_\T I_{2,1} \: & \le \:  C \int_\T    \|  \pa_t \pa_y^2 u  \|_{L^2(0,M)} \left( \| \pa_y u \|_{L^2(0,M)} \: + \: \| \int_0^y u \|_{L^2(0,M)} \: + \: \|  u \|_{L^2(0,M)} \right) \\
 & \le \:  C \int_\T   \|  \pa_t \pa_y^2 u  \|_{L^2(0,M)} \left( \| \pa_y u \|_{L^2(0,M)} \: + \: (1+M^2/2) \|  u \|_{L^2(0,M)} \right) \\
& \le \:     \eps  \int_\T    \|   \pa_y^2 u  \|_{L^2(0,M)}^2   \:   + \: 
  C_\eps  \int_\T  \left( \| \pa_y u \|_{L^2(0,M)} \: + \: \|  u \|_{L^2(0,M)} \right)^2   
  \end{align*}
where  Young's inequality and \eqref{deriveestemps} were used for the last inequality. This proves the claim.

\medskip
We now turn to the estimate of 
$I_{k,2}$.
Let $m > 0$ such that $u_s(y) = y$ for $y \le m$.  We claim that: for all $k=0,1,2$, 
\begin{equation} \label{I2k}
 I_{k,2} \: \le  \: C   \left( \| u \|^2_{L^2(0,M)} \: + \:   \| \pa_y u \|^2_{L^2(0,M)} \: + \:  \| \pa_y^2 u \|^2_{L^2(m,M)} \right).    
 \end{equation}
Again, we omitted the $t,x$ dependence in the notations. These inequalities follow from simple  integration by parts.  As before, we consider the case $k=2$, the other ones being easiest. We have
\begin{align*}
 I_{2,2} \:  = \:   \left( \pa_y^4 u | \pa_y^2 Lu \right) \: & = \:  \int_{\RR_+}  \pa_y^4 u  \left( u_s \pa^2_y u + u'_s \pa_y u - u^{(3)}_s \int_0^y u - u''_s u \right) dy   \\ 
& = \:  -\int_{\RR_+}  \pa_y^3 u  \, \pa_y \left( u_s \pa^2_y u + u'_s \pa_y u \right) dy \: -  \:   \int_{\RR_+} \pa_y^2 u   \, \pa^2_y \left( u^{(3)}_s \int_0^y u +u''_s u  \right) dy  
\end{align*}
through  integrations by parts. The last  term at the r.h.s involves derivatives of $u'_s$, so is  for $m \le y \le M$. It also involves up to second order derivatives of $u$ only.  Thus
$$  I_{2,2}  \: \le \:  -\int_{\RR_+}  \pa_y^3 u  \pa_y \left( u_s \pa^2_y u + u'_s \pa_y u \right) dy \: + \:  C \sum_{k=0}^2 \| \pa^k_y u \|^2_{L^2(m,M)}  . $$
As regards the first term, we have
\begin{align*} 
- & \int_{\RR_+}  \pa_y^3 u  \, \pa_y \left( u_s \pa^2_y u + u'_s \pa_y u \right) dy  \:  = \: - \int_{\RR_+}  \pa_y^3 u \left( u_s \pa^3_y u + 2 u'_s \pa^2_y u + u''_s  \pa_y u \right) \\
& \le \:   -\int_{\RR_+}  \pa_y^3 u  \left(2 u'_s \pa^2_y u + u''_s  \pa_y u \right)  \: = \: -\int_{\RR_+} u'_s \pa_y (\pa^2_y u)^2 + \int_{\RR_+} \pa^2_y u  \pa_y (u''_s  \pa_y u ) \\
&  = \: \int_{\RR_+} u''_s  (\pa^2_y u)^2 +  \int_{\RR_+}  \pa^2_y u  \pa_y (u''_s  \pa_y u ) \: \le \: C  \sum_{k=1}^2 \| \pa^k_y u \|^2_{L^2(m,M)}. 
\end{align*}
This proves our claim. 

\medskip
We must now link the Sobolev norms of $u$ and $Lu$. This is the purpose of the 
\begin{lemma} \label{lemmaLu}
Let $u = u(y) \in H^2(0,M)$, $u\vert_{y=0} = 0$. Then,  
$$ \| u \|_{H^1(0,M)} \: \le \: C \, \| L u \|_{H^2(0,M)} $$
and 
 $$ \| u \|_{H^2(m,M)} \: \le \: C_m \, \| L u \|_{H^2(0,M)} $$
\end{lemma}

\noindent
{\em Proof of the lemma.} 
Note that by our assumptions on $u_s$, $Lu$ belongs to $H^2(0,M)$, and satisfies $Lu\vert_{y=0}= \pa_y  Lu\vert_{y=0} = 0$. Denoting $f:= Lu$, we have an explicit representation of $u$ in terms of $f$, by solving the first order ODE $Lu = f$:
\begin{align*}
 u(y) \: & = \: u'_s(y)  \int_0^y \frac{f(t)}{u_s(t)^2} dt \: + \:   \frac{f(y)}{u_s(y)} \\
 & = \:   u'_s(y) \int_0^m \frac{f(t)}{t^2} dt  \: + \:    u'_s(y) \int_m^y \frac{f(t)}{u_s(t)^2} dt +  \frac{f(y)}{u_s(y)} \: := \: u_1(y) \: + \: u_2(y) \: + \: u_3(y).  
\end{align*}
Clearly,  $\| u_2 +  u_3 \|_{H^2(m,M)} \: \le \: C_m \| f \|_{H^2(0,M)} $. 
As regards $u_1$, we compute 
$$ \int_0^m \frac{f(t)}{t^2} dt \: = \:  \int_0^m   \frac{f'(t)}{t} dt  - \frac{f(m)}{m} $$  
so that 
$$ \| u_1 \|_{H^2(0,M)} \: \le \: C  \left| \int_0^m \frac{f(t)}{t^2} dt \right| \: \le \: C' \left( \left( \int_0^m \left| \frac{f'(t)}{t} \right| ^2 \right)^{1/2} \: + \:  \| f \|_{L^\infty(0,M)} \right)   \: \le \: C'' \| f \|_{H^2(0,M)}, $$ 
using Hardy and Sobolev inequalities. We thus obtain the second inequality. 
For the proof of the first inequality, 
it remains to control $u$ on the interval $(0,m)$. 
It satisfies there the equation  
$ y u - \int_0^y u  = f$,
 which gives $y u'  = f'$, and from Hardy's inequality, we deduce that $ \| u' \|_{L^2(0,m)} \: \le \: \| f \|_{H^2(0,m)}$. This ends the proof of the lemma.

\bigskip
Combining this lemma with \eqref{eqLu}, \eqref{I1k} and \eqref{I2k}, we obtain that the solution $u$ of \eqref{prandtl3} with boundary data $u\vert_{x=0} = 0$ satisfies 
\begin{equation} \label{estimateLu}
 \| Lu(x)\|^2_{L^2_t(\T; H^2_y(\RR_+))}  \: \le \:  C_\eps \,  \int_0^x \| Lu(x') \|^2_{L^2_t(\T; H^2_y(\RR_+))}  dx' \: + \: \eps \| \pa^2_y u \|^2_{L^2((0,x)\times \T \times \RR_+)}  
 \end{equation}
for all $\eps > 0$ ($\eps=1$ will be enough here). We still need to control the $L^2$ norm of $\pa^2_y u$. Therefore, we differentiate  \eqref{prandtl3} with respect to $y$, and write the resulting equation as
$$ u_s \pa_x L' u =  \pa^3_y u -\pa_t \pa_y u, \quad L'u \: := \:  \pa_y u - \frac{u''_s}{u_s} \int_0^y u. $$
Note that the second term defining $L'u$ has no singularity at zero, as $u''_s$ vanishes identically for $y \le m$. Multiplying this equation by $L'u$, and integrating in $t,x,y$ we obtain 
$$ \frac12 \int_{\T \times \RR_+} u_s |L' u(x) |^2  = \int_{(0,x) \times \T \times \RR_+} (\pa^3_y u - \pa_t \pa_y u ) L' u.  $$ 
One has easily, still with \eqref{deriveestemps}
$$ \int_{(0,x) \times \T \times \RR_+}  (- \pa_t \pa_y u ) L' u \: \le \: C \sum_{k=0}^1 \| \pa^k_y u \|^2_{L^2((0,x) \times \T \times (0,M))}, $$
whereas a simple integration by parts yield
$$ \int_{(0,x) \times \T \times \RR_+} \pa^3_y u \, L' u \: = \: - \int_{(0,x) \times \T \times \RR_+} \!\!\!\!\!\!\! |\pa^2_y u|^2 \: + \: \sum_{k=0}^2 \| \pa^k_y u\|^2_{L^2((0,x) \times \T \times (m,M))} $$
Hence, 
\begin{multline}
 \frac12 \int_{\T \times \RR_+} u_s |L' u(x) |^2 dydt\: 
 + \: \| \pa^2_y u \|_{L^2((0,x)\times )\T \times \RR_+)}  \\ \le \: C \left(  \sum_{k=0}^1 \| \pa^k_y u \|^2_{L^2((0,x) \times \T \times (0,M))} \:  +  \: \| \pa^2_y u\|^2_{L^2((0,x) \times \T \times (m,M))} \right).  
 \end{multline}
 Combining this inequality with Lemma \ref{lemmaLu} implies that 
 $$ \| \pa^2_y u \|^2_{L^2((0,x)\times \T \times \RR_+)}   \: \le \: C \| Lu \|^2_{L^2((0,x) \times \T; H^2_y(\RR_+))} $$
 Together with \eqref{estimateLu}, this leads to
 $$ \| Lu(x)\|^2_{H^2_{t,y}(\T \times \RR_+)} \: \le \: C_\eps \,  \int_0^x \| Lu(x') \|^2_{L^2_t(\T; H^2_y(\RR_+))}   dx'.  $$
By Gronwall lemma, we deduce that $Lu = 0$, and from there that $u =0$. This concludes the proof of the proposition, and in turn  the proof of Theorem \ref{theo3}. 
\end{proof}

\subsection{Final comments}
The  ill-posedness results discussed in this paper  may  give some insight into the classical  results  of  Oleinik on boundary layer theory.  These results, collected in the book \cite{Ole},  establish various well-posedness theorems for the Prandtl equation, under some monotonicity properties for the data with respect to $y$.  Interestingly, the monotonicity assumptions used by Oleinik are different whether the steady case or the unsteady one is considered. Roughly: 
\begin{itemize} 
\item In the steady case, the main hypothesis is that  $u\vert_{x=0}$ be monotonic near $0$, that is $\pa_y u\vert_{x=0}(0) > 0$. 
\item But in the unsteady case, the monotonicity hypothesis is stronger. The data $u\vert_{x=0}$ and $u\vert_{t=0}$ are assumed to be monotonic everywhere, that is $\pa_y u\vert_{t=0}(x,y) > 0$ and $\pa_y u\vert_{x=0}(t,y) > 0$ for all $y > 0$. 
\end{itemize}
In Oleinik's work, these assumptions appear as technical, connected to some special changes of variables  that are different in the steady and unsteady case (these are  the so-called  Von Mises  and Crocco transforms). 

\medskip
From our results, one may believe  that this difference in the steady and unsteady assumptions is more than technical. Indeed, we have proved that as soon as time is involved in the Prandtl system, either through an IVP or a BVP with time periodicity,  a lack of monotonicity somewhere in the flow ($u_s'(a) > 0$ for some  $a > 0$) is enough to trigger a strong instability (at least at the linearized level). On the contrary, as soon as a steady situation is considered, this instability mechanism dissapears. For instance, the space instability developped in this section relies on high frequency time oscillations, and is not there when the steady BVP is considered. Actually,  the  BVP for the steady version of  \eqref{prandtl3}: 
\begin{equation*} 
\left\{
\begin{array}{rlll} 
   u_s \D_x u +  u_s' \, v   - \D^2_y  u   &=& 0,  \quad y > 0,\\
\D_x  u + \D_y v & = & 0,  \quad  y > 0, \\
u = v  &=& 0, \quad  y = 0.   
\end{array}
\right.
\end{equation*}
can be shown to be well-posed for the shear flow $u_s$ considered in Proposition \ref{uniqueBVP}. Indeed,  the estimates established there to prove uniqueness can be used in the same way to establish some {\it a priori estimates}, and from there a well-posedness result (in appropriate functional spaces).  Again, this is coherent with the works of Oleinik, as we assumed in Proposition \ref{uniqueBVP} that $u_s(y) = y$ for small $y$.  
Still in the spirit of this proposition, it would be very interesting to recover the well-posedness results of Oleinik without special changes of variables, that is simply through linearization and/or energy estimates.

\end{document}